  \author{Brendan Fong}
  \thanks{I thank John Baez, Daniel Marsden, and an anonymous referee for
    careful, detailed feedback on drafts, Jamie Vicary for useful conversations,
    and the Clarendon Fund and Hertford College, Oxford, for their support.  
  }
  \address{Department of Computer Science \\
    University of Oxford \\
    United Kingdom OX1 3QD
  }
  \title{Decorated cospans}
  \keywords{cospan, decorated cospan, hypergraph category, well-supported
    compact closed category, separable algebra, Frobenius algebra, Frobenius
    monoid}
  \definecolor{darkblue}{rgb}{0,0,0.7} 
  \newcommand{\define}[1]{{\bf \boldmath #1}}
  \newcommand{\R}{{\mathbb{R}}}
  \newcommand{\hooklongrightarrow}{\lhook\joinrel\longrightarrow}
  \newcommand{\linsub}{\operatorname{LinSub}}
  \newcommand{\lgraph}{\operatorname{Graph}}
  \newcommand{\res}{\operatorname{Res}}
  \tikzset{font=\footnotesize}
  \tikzstyle{none}=[inner sep=0pt]
  \tikzstyle{connection}=[circuit symbol open,
  \tikzset{->-/.style={decoration={
    markings,
    mark=at position .54 with {\arrow{latex}}},postaction={decorate}}}
\begin{document}   

\maketitle

\begin{abstract}
  Let $\mathcal C$ be a category with finite colimits, writing its coproduct
  $+$, and let $(\mathcal D, \otimes)$ be a braided monoidal category. We
  describe a method of producing a symmetric monoidal category from a lax
  braided monoidal functor $F\colon (\mathcal C,+) \to (\mathcal D, \otimes)$, and of
  producing a strong monoidal functor between such categories from a monoidal
  natural transformation between such functors. The objects of these categories,
  our so-called `decorated cospan categories', are simply the objects of
  $\mathcal C$, while the morphisms are pairs comprising a cospan $X \rightarrow
  N \leftarrow Y$ in $\mathcal C$ together with an element $1 \to FN$ in
  $\mathcal D$. Moreover, decorated cospan categories are hypergraph
  categories---each object is equipped with a special commutative Frobenius
  monoid---and their functors preserve this structure.
\end{abstract}

\section{Introduction}

There is a well-known way to compose cospans in a category with finite colimits:
given cospans
\[
  \begin{aligned}
    \xymatrix{
      & N \\
      X \ar[ur]^{i_X} && Y \ar[ul]_{o_Y}
    }
  \end{aligned}
  \qquad \vcenter{\xymatrix{\mbox{and}}} \qquad
  \begin{aligned}
    \xymatrix{
      & M \\
      Y \ar[ur]^{i_Y} && Z, \ar[ul]_{o_Z}
    }
  \end{aligned}
\]
we take the pushout over their shared foot $Y$ 
\[
  \xymatrix{
    && P \\
    & N \ar[ur]^j && M \ar[ul]_{j'} \\
    X \ar[ur]^{i_X} && Y \ar[ul]_{o_Y} \ar[ur]^{i_Y} && Z \ar[ul]_{o_Z}
  }
\]
to get a cospan from $X$ to $Z$. In many situations, however, we wish to compose
`decorated' cospans, where the apex of each cospan is equipped with some extra
structure. In this article we detail a method for composing such decorated
cospans. 

Beyond category theoretic interest, the motivation for such a method lies in
developing compositional accounts of semantics associated to topological
diagrams. While this has long been a technique associated with topological
quantum field theory, dating back to \cite{At}, it has most recently had
significant influence in the nascent field of categorical network theory, with
application to automata and computation \cite{KSW2, Sp}, electrical circuits
\cite{BF}, signal flow diagrams \cite{BSZ, BE}, Markov processes \cite{BP,
ASW}, and dynamical systems \cite{VSL}, among others. 

It has been recognised for some time that spans and cospans provide an intuitive
framework for composing network diagrams \cite{KSW}, and the material we develop
here is a variant on this theme. In the case of finite graphs, the intuition
reflected is this: given two graphs, we may construct a third by gluing chosen
vertices of the first with chosen vertices of the second. It is our goal in this
article to view this process as composition of morphisms in a category, in a way
that also facilitates the construction of a composition rule for any semantics
associated to the diagrams, and a functor between these two resulting
categories.

To see how this works, let us start with the following graph:
\begin{center}
  \begin{tikzpicture}[auto,scale=2.3]
    \node[circle,draw,inner sep=1pt,fill]         (A) at (0,0) {};
    \node[circle,draw,inner sep=1pt,fill]         (B) at (1,0) {};
    \node[circle,draw,inner sep=1pt,fill]         (C) at (0.5,-.86) {};
    \path (B) edge  [bend right,->-] node[above] {0.2} (A);
    \path (A) edge  [bend right,->-] node[below] {1.3} (B);
    \path (A) edge  [->-] node[left] {0.8} (C);
    \path (C) edge  [->-] node[right] {2.0} (B);
  \end{tikzpicture}
\end{center}
We shall work with labelled, directed graphs, as the additional data help
highlight the relationships between diagrams. Now, for this graph to be a
morphism, we must equip it with some notion of `input' and `output'. We do this by
marking vertices using functions from finite sets:
\begin{center}
  \begin{tikzpicture}[auto,scale=2.15]
    \node[circle,draw,inner sep=1pt,fill=gray,color=gray]         (x) at (-1.4,-.43) {};
    \node at (-1.4,-.9) {$X$};
    \node[circle,draw,inner sep=1pt,fill]         (A) at (0,0) {};
    \node[circle,draw,inner sep=1pt,fill]         (B) at (1,0) {};
    \node[circle,draw,inner sep=1pt,fill]         (C) at (0.5,-.86) {};
    \node[circle,draw,inner sep=1pt,fill=gray,color=gray]         (y1) at (2.4,-.25) {};
    \node[circle,draw,inner sep=1pt,fill=gray,color=gray]         (y2) at (2.4,-.61) {};
    \node at (2.4,-.9) {$Y$};
    \path (B) edge  [bend right,->-] node[above] {0.2} (A);
    \path (A) edge  [bend right,->-] node[below] {1.3} (B);
    \path (A) edge  [->-] node[left] {0.8} (C);
    \path (C) edge  [->-] node[right] {2.0} (B);
    \path[color=gray, very thick, shorten >=10pt, shorten <=5pt, ->, >=stealth] (x) edge (A);
    \path[color=gray, very thick, shorten >=10pt, shorten <=5pt, ->, >=stealth] (y1) edge (B);
    \path[color=gray, very thick, shorten >=10pt, shorten <=5pt, ->, >=stealth] (y2) edge (B);
  \end{tikzpicture}
\end{center}
Let $N$ be the set of vertices of the graph. Here the finite sets $X$, $Y$, and
$N$ comprise one, two, and three elements respectively, drawn as points, and the
values of the functions $X \to N$ and $Y \to N$ are indicated by the grey
arrows. This forms a cospan in the category of finite sets, one with the set at
the apex decorated by our given graph.

Given another such decorated cospan with input set equal to the output of the
above cospan
\begin{center}
  \begin{tikzpicture}[auto,scale=2.15]
    \node[circle,draw,inner sep=1pt,fill=gray,color=gray]         (y1) at (-1.4,-.25) {};
    \node[circle,draw,inner sep=1pt,fill=gray,color=gray]         (y2) at (-1.4,-.61) {};
    \node at (-1.4,-.9) {$Y$};
    \node[circle,draw,inner sep=1pt,fill]         (A) at (0,0) {};
    \node[circle,draw,inner sep=1pt,fill]         (B) at (1,0) {};
    \node[circle,draw,inner sep=1pt,fill]         (C) at (0.5,-.86) {};
    \node[circle,draw,inner sep=1pt,fill=gray,color=gray]         (z1) at (2.4,-.25) {};
    \node[circle,draw,inner sep=1pt,fill=gray,color=gray]         (z2) at (2.4,-.61) {};
    \node at (2.4,-.9) {$Z$};
    \path (A) edge  [->-] node[above] {1.7} (B);
    \path (C) edge  [->-] node[right] {0.3} (B);
    \path[color=gray, very thick, shorten >=10pt, shorten <=5pt, ->, >=stealth] (y1) edge (A);
    \path[color=gray, very thick, shorten >=10pt, shorten <=5pt, ->, >=stealth] (y2)
    edge (C);
    \path[color=gray, very thick, shorten >=10pt, shorten <=5pt, ->, >=stealth] (z1) edge (B);
    \path[color=gray, very thick, shorten >=10pt, shorten <=5pt, ->, >=stealth] (z2) edge (C);
  \end{tikzpicture}
\end{center}
composition involves gluing the graphs along the identifications
\begin{center}
  \begin{tikzpicture}[auto,scale=2.15]
    \node[circle,draw,inner sep=1pt,fill=gray,color=gray]         (x) at (-1.3,-.43) {};
    \node at (-1.3,-.9) {$X$};
    \node[circle,draw,inner sep=1pt,fill]         (A) at (0,0) {};
    \node[circle,draw,inner sep=1pt,fill]         (B) at (1,0) {};
    \node[circle,draw,inner sep=1pt,fill]         (C) at (0.5,-.86) {};
    \node[circle,draw,inner sep=1pt,fill=gray,color=gray]         (y1) at (2.3,-.25) {};
    \node[circle,draw,inner sep=1pt,fill=gray,color=gray]         (y2) at (2.3,-.61) {};
    \node at (2.3,-.9) {$Y$};
    \path (B) edge  [bend right,->-] node[above] {0.2} (A);
    \path (A) edge  [bend right,->-] node[below] {1.3} (B);
    \path (A) edge  [->-] node[left] {0.8} (C);
    \path (C) edge  [->-] node[right] {2.0} (B);
    \path[color=gray, very thick, shorten >=10pt, shorten <=5pt, ->, >=stealth] (x) edge (A);
    \path[color=gray, very thick, shorten >=10pt, shorten <=5pt, ->, >=stealth] (y1) edge (B);
    \path[color=gray, very thick, shorten >=10pt, shorten <=5pt, ->, >=stealth] (y2) edge (B);
    \node[circle,draw,inner sep=1pt,fill]         (A') at (3.6,0) {};
    \node[circle,draw,inner sep=1pt,fill]         (B') at (4.6,0) {};
    \node[circle,draw,inner sep=1pt,fill]         (C') at (4.1,-.86) {};
    \node[circle,draw,inner sep=1pt,fill=gray,color=gray]         (z1) at (5.9,-.25) {};
    \node[circle,draw,inner sep=1pt,fill=gray,color=gray]         (z2) at (5.9,-.61) {};
    \node at (5.9,-.9) {$Z$};
    \path (A') edge  [->-] node[above] {1.7} (B');
    \path (C') edge  [->-] node[right] {0.3} (B');
    \path[color=gray, very thick, shorten >=10pt, shorten <=5pt, ->, >=stealth] (y1) edge (A');
    \path[color=gray, very thick, shorten >=10pt, shorten <=5pt, ->, >=stealth] (y2)
    edge (C');
    \path[color=gray, very thick, shorten >=10pt, shorten <=5pt, ->, >=stealth] (z1) edge (B');
    \path[color=gray, very thick, shorten >=10pt, shorten <=5pt, ->, >=stealth] (z2) edge (C');
  \end{tikzpicture}
\end{center}
specified by the shared foot of the two cospans. This results in the decorated
cospan
\begin{center}
  \begin{tikzpicture}[auto,scale=2.15]
    \node[circle,draw,inner sep=1pt,fill=gray,color=gray]         (x) at (-1.4,-.43) {};
    \node at (-1.4,-.9) {$X$};
    \node[circle,draw,inner sep=1pt,fill]         (A) at (0,0) {};
    \node[circle,draw,inner sep=1pt,fill]         (B) at (1,0) {};
    \node[circle,draw,inner sep=1pt,fill]         (C) at (0.5,-.86) {};
    \node[circle,draw,inner sep=1pt,fill]         (D) at (2,0) {};
    \node[circle,draw,inner sep=1pt,fill=gray,color=gray]         (z1) at (3.4,-.25) {};
    \node[circle,draw,inner sep=1pt,fill=gray,color=gray]         (z2) at (3.4,-.61) {};
    \node at (3.4,-.9) {$Z$};
    \path (B) edge  [bend right,->-] node[above] {0.2} (A);
    \path (A) edge  [bend right,->-] node[below] {1.3} (B);
    \path (A) edge  [->-] node[left] {0.8} (C);
    \path (C) edge  [->-] node[right] {2.0} (B);
    \path (B) edge  [bend left,->-] node[above] {1.7} (D);
    \path (B) edge  [bend right,->-] node[below] {0.3} (D);
    \path[color=gray, very thick, shorten >=10pt, shorten <=5pt, ->, >=stealth] (x) edge (A);
    \path[color=gray, very thick, shorten >=10pt, shorten <=5pt, ->, >=stealth] (z1)
    edge (D);
    \path[bend left, color=gray, very thick, shorten >=10pt, shorten <=5pt, ->, >=stealth] (z2)
    edge (B);
  \end{tikzpicture}
\end{center}
The decorated cospan framework generalises this intuitive construction.

More precisely: fix a set $L$. Then given a finite set $N$, we may talk of the
collection of finite $L$-labelled directed multigraphs, to us just $L$-graphs
or simply graphs, that have $N$ as their set of vertices. Write such a graph
$(N,E,s,t,r)$, where $E$ is a finite set of edges, $s\colon E \to N$ and $t\colon E \to N$
are functions giving the source and target of each edge respectively, and $r\colon  E
\to L$ equips each edge with a label from the set $L$.  Next, given a function
$f\colon N \to M$, we may define a function from graphs on $N$ to graphs on $M$
mapping $(N,E,s,t,r)$ to $(M,E,f \circ s,f \circ t, r)$.  After dealing
appropriately with size issues, this gives a lax monoidal functor from
$(\FinSet,+)$ to $(\Set,\times)$.\footnote{Here $(\FinSet,+)$ is the monoidal
  category of finite sets and functions with disjoint union as monoidal
  product, and $(\Set,\times)$ is the category of sets and functions with
  cartesian product as monoidal product. One might ensure the collection of
  graphs forms a set in a number of ways. One such method is as follows: the
  categories of finite sets and finite graphs are essentially small; replace
  them with equivalent small categories. We then constrain the graphs
  $(N,E,s,t,r)$ to be drawn only from the objects of our small category of finite
  graphs.}  

Now, taking any lax monoidal functor $(F,\varphi)\colon  (\mathcal C,+) \to (\mathcal
D,\otimes)$ with $\mathcal C$ having finite colimits and coproduct written $+$,
the decorated cospan category associated to $F$ has as objects the objects of
$\mathcal C$, and as morphisms pairs comprising a cospan in $\mathcal C$
together with some morphism $1 \to FN$, where $1$ is the unit in $(\mathcal
D,\otimes)$ and $N$ is the apex of the cospan. In the case of our graph
functor, this additional data is equivalent to equipping the apex $N$ of the
cospan with a graph. We thus think of our morphisms as having two distinct
parts: an instance of our chosen structure on the apex, and a cospan describing
interfaces to this structure. Our first theorem says that when $(\mathcal
D,\otimes)$ is braided monoidal and $(F,\varphi)$ lax braided monoidal, we may
further give this data a composition rule and monoidal product such that the
resulting `decorated cospan category' is symmetric monoidal with a special
commutative Frobenius monoid on each object.  

Suppose now we have two such lax monoidal functors; we then have two such
decorated cospan categories. Our second theorem is that, given also a monoidal
natural transformation between these functors, we may construct a strict
monoidal functor between their corresponding decorated cospan categories.  These
natural transformations can often be specified by some semantics associated to
some type of topological diagram. A trivial case of such is assigning to a
finite graph its number of vertices, but richer examples abound, including
assigning to a directed graph with edges labelled by rates its depicted Markov
process, or assigning to an electrical circuit diagram the current--voltage
relationship such a circuit would impose.

An advantage of the decorated cospan framework is that the resulting categories
are hypergraph categories, and the resulting functors respect this structure.
As dagger compact categories, hypergraph categories themselves have a rich
diagrammatic nature \cite{Se}, and in cases when our decorated cospan categories
are inspired by diagrammatic applications, the hypergraph structure provides
language to describe natural operations on our diagrams, such as juxtaposing,
rotating, and reflecting them.

\subsection{Outline.}
The structure of this paper is straightforward: in the following section we
review some basic background material, which then allows us to give the
constructions of decorated cospan categories and their functors in Sections
\ref{sec:dcc} and \ref{sec:dcf} respectively. We then explicate these
definitions through some examples in Section \ref{sec:ex}. For completeness, we
supply further details of our proofs in the Appendix \ref{sec:proofs}.

\subsection{Notation.}
We shall assume the following standard names for certain distinguished objects
and morphisms, only disambiguating the symbols with subscripts when we judge
that the extra clarity is worth the clutter. We write: 
\begin{itemize} 
  \item $1$ for both identity morphisms and monoidal units, leaving context to
    determine which one we mean.
  \item $\lambda$, $\rho$, $a$, and $\sigma$ for respectively the left unitor, right unitor,
    associator, and, if present, braiding, in a monoidal category.
  \item $\varnothing$ for the initial object in a category.
  \item $!$ for the unique map from the initial object to a given object.
\end{itemize}

\section{Background}
\subsection{Cospan categories.}
Recall that a \define{cospan} from $X$ to $Y$ in a category $\mathcal C$ is an
object $N$ in $\mathcal C$ with a pair of morphisms $(i\colon  X \to N$, $o\colon  Y \to
N$):
\[
  \xymatrix{
    & N \\
    X \ar[ur]^{i} && Y. \ar[ul]_{o}
  }
\]
We shall refer to $X$ and $Y$ as the \define{feet}, and $N$ as the
\define{apex} of the cospan.  Cospans may be composed using the pushout from
the common foot, when such a pushout exists: given cospans $X
\stackrel{i_X}{\longrightarrow} N \stackrel{o_Y}{\longleftarrow} Y$ from $X$ to
$Y$ and $Y \stackrel{i_Y}{\longrightarrow} M \stackrel{o_Z}{\longleftarrow} Z$
from $Y$ to $Z$, their composite cospan is $X \stackrel{j \circ
i_X}{\longrightarrow} P \stackrel{j'\circ i_Z}{\longleftarrow} Z$, where $P$,
$(j\colon  N \to P)$, and $(j'\colon  M \to P)$ form the top half of the pushout square
\[
  \xymatrix{
    && P \\
    & N \ar[ur]^j && M \ar[ul]_{j'} \\
    X \ar[ur]^{i_X} && Y \ar[ul]_{o_Y} \ar[ur]^{i_Y} && Z. \ar[ul]_{o_Z}
  }
\]
A \define{map of cospans} is a morphism $n\colon  N \to N'$ in $\mathcal C$ between
the apices of two cospans $X \stackrel{i}{\longrightarrow} N
\stackrel{o}{\longleftarrow} Y$ and $X \stackrel{i'}{\longrightarrow} N'
\stackrel{o'}{\longleftarrow} Y$ with the same feet, such that both triangles 
\[
  \xymatrix{
    & N \ar[dd]^n  \\
    X \ar[ur]^{i} \ar[dr]_{i'} && Y \ar[ul]_{o} \ar[dl]^{o'}\\
    & N'
  }
\]
commute. Given a category $\mathcal C$ with pushouts, we may define a category
$\mathrm{Cospan}(\mathcal C)$ with objects the objects of $\mathcal C$ and
morphisms isomorphism classes of cospans \cite{Be}. We will often abuse our
terminology and refer to cospans themselves as morphisms in some cospan
category $\mathrm{Cospan}(\mathcal C)$; we of course refer instead to the
isomorphism class of the said cospan.

\subsection{Hypergraph categories.}

A \define{Frobenius monoid} $(X,\mu,\delta,\eta,\epsilon)$ in a monoidal
category $(\mathcal C, \otimes)$ is an object $X$ together with monoid $(X,\mu,
\eta)$ and comonoid $(X,\delta,\epsilon)$ structures such that
\[
  (1 \otimes \mu) \circ (\delta \otimes 1) = \delta \circ \mu = (\mu \otimes 1)
  \circ (1 \otimes \delta)\colon  X \otimes X \longrightarrow X \otimes X.
\]
A Frobenius monoid is further called \define{special} if 
\[
  \mu \circ \delta = 1\colon  X \longrightarrow X,
\]
and further called \define{commutative} if the ambient monoidal category is symmetric
and the monoid and comonoid structures that comprise the Frobenius monoid are
commutative and cocommutative respectively. Note that for Frobenius monoids
commutativity of the monoid structure implies cocommutativity of the comoniod
structure, and vice versa, so the use of the term `commutativity' for both the
Frobenius monoid and the constituent monoid is not ambiguous.

A \define{hypergraph category} is a
symmetric monoidal category in which each object is equipped with a special
commutative Frobenius structure $(X,\mu_X,\delta_X,\eta_X,\epsilon_X)$ such that 
\[
  \begin{array}{cc}
    \mu_{X\otimes Y} = (\mu_X \otimes \mu_Y)\circ(1_X \otimes \sigma_{YX}\otimes
    1_Y) \qquad&
    \eta_{X\otimes Y} = \eta_X \otimes \eta_Y \\
    \delta_{X\otimes Y} = (1_X \otimes \sigma_{XY}\otimes 1_Y)\circ(\delta_X
    \otimes \delta_Y) \qquad&
    \epsilon_{X\otimes Y} = \epsilon_X \otimes \epsilon_Y.
  \end{array}
\]
A functor $(F,\varphi)$ of hypergraph categories, or \define{hypergraph
functor}, is a strong symmetric monoidal functor $(F,\varphi)$ that preserves
the hypergraph structure. More precisely, the latter condition means that given
an object $X$, the special commutative Frobenius structure on $FX$ must be 
\[
  (FX,\enspace F\mu_X \circ \varphi_{X,X},\enspace  \varphi^{-1} \circ F\delta_X,\enspace  F\eta_X \circ
\varphi_1,\enspace  \varphi_1 \circ \epsilon_X).
\]

This terminology was introduced recently \cite{Ki}, in reference to the fact
that these special commutative Frobenius monoids provide precisely the structure
required to draw graphs with `hyperedges': wires connecting any number of
inputs to any number of outputs. Commutative special Frobenius monoids are also
known as commutative separable algebras \cite{RSW}, and hypergraph categories as
well-supported compact closed categories \cite{Ca}.

Note that if an object $X$ is equipped with a Frobenius monoid structure then
the maps $\epsilon \circ \mu\colon  X \otimes X \longrightarrow 1$ and $\delta \circ
\eta\colon  1 \longrightarrow X \otimes X$ obey 
\[
  \big(1 \otimes (\epsilon \circ \mu)\big) \circ \big((\delta \circ \eta)
  \otimes 1\big) = 1_X = \big((\epsilon \circ \mu) \otimes 1\big) \circ \big(1
  \otimes (\delta \circ \eta)\big)\colon X \longrightarrow X.
\]
Thus if an object carries a Frobenius monoid it is also self-dual, and any
hypergraph category is a fortiori self-dual compact closed. Mapping each
morphism $f\colon  X \to Y$ to its dual morphism
\[
  \big((\epsilon_Y \circ \mu_Y) \otimes 1_X\big) \circ \big( 1_Y \otimes f
  \otimes 1_X \big) \circ \big(1_Y \otimes (\delta_X \circ \eta_X)\big)\colon  Y
  \longrightarrow X
\]
further equips each hypergraph category with a so-called dagger functor---an
involutive contravariant endofunctor that is the identity on objects---such that
the category is a dagger compact category. Dagger compact categories were first
introduced in the context of categorical quantum mechanics \cite{AC}, under the
name strongly compact closed category, and have been demonstrated to be a key
structure in diagrammatic reasoning and the logic of quantum mechanics.

We shall see that every decorated cospan category is a hypergraph category, and
hence also a dagger compact category.

\begin{example}
  A central example of a hypergraph category is the category
  $\mathrm{Cospan(\mathcal C)}$ of cospans in any category $\mathcal C$ with
  finite colimits. We will later see that decorated cospan categories are a
  generalisation of such categories, and each inherits a hypergraph structure
  from such. 

  First, $\mathrm{Cospan(\mathcal C)}$ inherits a symmetric monoidal structure
  from $\mathcal C$. We call a subcategory $\mathcal C$ of a category $\mathcal
  D$ \define{wide} if $\mathcal C$ contains all objects of $\mathcal D$, and
  call a functor that is faithful and bijective-on-objects a \define{wide
  embedding}. Note then that we have a wide embedding
  \[
    \mathcal C \hooklongrightarrow \mathrm{Cospan(\mathcal C)}
  \]
  that takes each object of $\mathcal C$ to itself as an object of
  $\mathrm{Cospan(\mathcal C)}$, and each morphism $f\colon  X \to Y$ in $\mathcal C$
  to the cospan
  \[
    \xymatrix{
      & Y \\
      X \ar[ur]^{f} && Y, \ar@{=}[ul]
    }
  \]
  where the extended `equals' sign denotes an identity morphism. This allows us
  to view $\mathcal C$ as a wide subcategory of $\mathrm{Cospan(\mathcal C)}$.

  Now as $\mathcal C$ has finite colimits, it can be given a symmetric monoidal
  structure with the coproduct the monoidal product; we write this monoidal
  category $(\mathcal C,+)$, and write $\varnothing$ for the initial object, the
  monoidal unit of this category. Then $\mathrm{Cospan(\mathcal C)}$ inherits
  the same symmetric monoidal structure: since the monoidal product $+\colon \mathcal
  C \times \mathcal C \to \mathcal C$ is left adjoint to the diagram functor, it
  preserves colimits, and so extends to a functor $+\colon
  \mathrm{Cospan(\mathcal C)} \times \mathrm{Cospan(\mathcal C)} \to
  \mathrm{Cospan(\mathcal C)}$. The remainder of the monoidal structure is
  inherited because $\mathcal C$ is a wide subcategory of
  $\mathrm{Cospan(\mathcal C)}$.

  Next, the Frobenius structure comes from copairings of identity morphisms. We
  call cospans 
  \[
    \xymatrix{
      & N \\
      X \ar[ur]^{i} && Y \ar[ul]_{o}
    }
    \qquad \xymatrix@R=8pt{\\\textrm{and}} \qquad 
    \xymatrix{
      & N \\
      Y \ar[ur]^{o} && X \ar[ul]_{i}
    }
  \]
  that are reflections of each other \define{opposite} cospans. Given any object
  $X$ in $\mathcal C$, the copairing $[1_X,1_X]\colon  X + X \to X$ of two identity
  maps on $X$, together with the unique map $!\colon  \varnothing \to X$ from the
  initial object to $X$, define a monoid structure on $X$. Considering these
  maps as morphisms in $\mathrm{Cospan(\mathcal C)}$, we may take them together
  with their opposites to give a special commutative Frobenius structure on $X$.
  In this way we consider each category $\mathrm{Cospan(\mathcal C)}$ a
  hypergraph category.

  It is a simple computation to check that the resulting dagger functor simply
  takes a cospan $X \stackrel{i}{\longrightarrow} N \stackrel{o}{\longleftarrow}
  Y$ to its opposite cospan $Y \stackrel{o}{\longrightarrow} N
  \stackrel{i}{\longleftarrow} X$.
\end{example}

\section{Decorated cospan categories} \label{sec:dcc}

We now detail our central construction and state the main theorem.
\begin{definition} \label{def:fcospans}
  Let $\mathcal C$ be a category with finite colimits, and
  \[
    (F,\varphi)\colon  (\mathcal C,+) \longrightarrow (\mathcal D, \otimes)
  \]
  be a lax monoidal functor. We define a \define{decorated cospan}, or more
  precisely an $F$-decorated cospan, to be a pair 
  \[
    \left(
    \begin{aligned}
      \xymatrix{
	& N \\  
	X \ar[ur]^{i} && Y \ar[ul]_{o}
      }
    \end{aligned}
    ,
    \qquad
    \begin{aligned}
      \xymatrix{
	FN \\
	1 \ar[u]_{s}
      }
    \end{aligned}
    \right)
  \]
  comprising a cospan $X \stackrel{i}\rightarrow N \stackrel{o}\leftarrow Y$ in
  $\mathcal C$ together with an element $1 \stackrel{s}\rightarrow FN$ of
  the $F$-image $FN$ of the apex of the cospan. We shall call the element $1
  \stackrel{s}\rightarrow FN$ the \define{decoration} of the decorated
  cospan. A morphism of decorated cospans 
  \[
    n\colon  \big(X \stackrel{i_X}\longrightarrow N \stackrel{o_Y}\longleftarrow
    Y,\enspace 1 \stackrel{s}\longrightarrow FN\big) \longrightarrow \big(X
    \stackrel{i'_X}\longrightarrow N' \stackrel{o'_Y}\longleftarrow Y,\enspace 1
    \stackrel{s'}\longrightarrow FN'\big)
  \]
  is a morphism $n\colon  N \to N'$ of cospans such that $Fn \circ s = s'$.
\end{definition}

\begin{proposition}
  There is a category $F\mathrm{Cospan}$ of $F$-decorated cospans, with objects
  the objects of $\mathcal C$, and morphisms isomorphism classes of
  $F$-decorated cospans. On representatives of the isomorphism classes,
  composition in this category is given by pushout of cospans in $\mathcal C$
  \[
    \xymatrix{
      && N+_YM \\
      & N \ar[ur]^{j_N} && M \ar[ul]_{j_M} \\
      \quad X \quad \ar[ur]^{i_X} && Y \ar[ul]_{o_Y} \ar[ur]^{i_Y} && \quad Z
      \quad \ar[ul]_{o_Z}
    }
  \]
  paired with the composite
  \[
    1 \stackrel{\lambda^{-1}}\longrightarrow 1 \otimes 1 \stackrel{s \otimes
    t}\longrightarrow FN \otimes FM \stackrel{\varphi_{N,M}}\longrightarrow
    F(N+M) \stackrel{F[j_N,j_M]}\longrightarrow F(N+_YM)
  \]
  of the tensor product of the decorations with the $F$-image of the copairing
  of the pushout maps.
\end{proposition}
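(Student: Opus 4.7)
The plan is to verify the three usual categorical obligations in order: that the proposed composition descends to isomorphism classes, that it admits identities, and that it is associative. Throughout I will treat decorations as morphisms in $\mathcal D$ and use only (i) the universal property of pushouts in $\mathcal C$, (ii) the functoriality of $F$, and (iii) the lax monoidal coherence of $\varphi$.

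First I would specify the identities. For an object $X$ of $\mathcal C$, I take the identity decorated cospan to be $X \stackrel{1_X}\longrightarrow X \stackrel{1_X}\longleftarrow X$ decorated by $F!_X \circ \varphi_1 \colon 1 \to F\varnothing \to FX$, using the unique map $!_X \colon \varnothing \to X$. To check the right unit law, composing an arbitrary decorated cospan $(X \to N \leftarrow Y, s)$ with this identity on $Y$ yields, up to canonical iso, the pushout $N +_Y Y \cong N$ with pushout maps $1_N$ and the map $o \colon Y \to N$, so the underlying cospan is unchanged. The new decoration simplifies via the lax unitality axiom for $\varphi$ and naturality of $\lambda$ to the composite $1 \xrightarrow{s} FN \xrightarrow{F1_N} FN$, i.e. $s$ itself. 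The left unit law is analogous; this is where the specific choice $F!_X \circ \varphi_1$ is forced.

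Next I would verify that composition is well defined on isomorphism classes. A different pushout $P'$ of the shared foot $Y$ gives a canonical isomorphism $N +_Y M \to P'$; its $F$-image is precisely the map of decorations needed for a morphism of decorated cospans, by naturality of $\varphi$ with respect to the induced maps $N \to P'$ and $M \to P'$. Similarly, if I replace either input cospan by an isomorphic representative via maps $n \colon N \to N''$ and $m \colon M \to M''$ of decorated cospans, the universal property of the pushout produces an iso $p \colon N +_Y M \to N'' +_Y M''$, and the compatibility $Fn \circ s = s''$, $Fm \circ t = t''$ together with the naturality of $\varphi_{-,-}$ ensure $Fp$ sends the old composite decoration to the new one.

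Associativity is where the real content lies, and I expect it to be the main obstacle. Given three composable decorated cospans with apices $N$, $M$, $L$ and decorations $s,t,u$, both bracketings produce the same underlying cospan with apex $N +_Y M +_Z L$, because pushouts compose; the universal property supplies the required iso between the two iterated pushouts, and both copairings $F[j_N, j_M, j_L]$ agree up to $F$ applied to this iso. The delicate part is matching the two iterated decoration composites. Schematically, one must show
\[
  F[j_N, j_{ML}] \circ \varphi_{N, M+_Z L} \circ \bigl(s \otimes (\text{comp of } t,u)\bigr) \circ \lambda^{-1}
\]
agrees with the other bracketing. I would chase this through a diagram whose cells are: the associativity axiom $\varphi_{-,-+-} \circ (1 \otimes \varphi_{-,-}) \circ a = \varphi_{-+-,-} \circ (\varphi_{-,-} \otimes 1)$, the coherence between $\lambda$, $\rho$ and $a$, naturality of $\varphi$ applied to the pushout insertions, and functoriality of $F$ on the evident equality $[j_N, [j_M, j_L] \circ \iota] = [[j_N, j_M], j_L \circ \iota']$ of copairings (which is where the universal property of the iterated pushout feeds in). No step is individually hard, but assembling them into one coherent diagram is the bookkeeping exercise I would expect the appendix to carry out in full; at this stage I would simply indicate which coherence cell handles each square.
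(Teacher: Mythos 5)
Your proposal is correct and follows essentially the same route as the paper: the same identity decoration $F!\circ\varphi_1$, well-definedness via the pushout-induced isomorphism together with naturality of $\varphi$, and associativity and unitality established by diagrams whose cells are exactly the ones you list (the lax associativity/unit axioms for $(F,\varphi)$, unitor--associator coherence in $\mathcal D$, naturality of $\varphi$, and functoriality of $F$ applied to the copairing identities coming from the iterated pushout). These are precisely the cells labelled (D), (F), and (C) in the paper's Appendix \ref{app:welldefined}--\ref{app:identities}, so the only difference is that you indicate the cells rather than drawing the full diagrams.
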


\begin{proof}
  The identity morphism on an object $X$ in a decorated cospan category is
  simply the identity cospan decorated as follows:
  \[
    \left(
    \begin{aligned}
      \xymatrix{
	& X \\  
	X \ar@{=}[ur] && X \ar@{=}[ul]
      }
    \end{aligned}
    ,
    \qquad
    \begin{aligned}
      \xymatrixrowsep{.5pc}
      \xymatrix{
	FX \\
	F\varnothing \ar[u]_{F!} \\
	1 \ar[u]_{\varphi_1}
      }
    \end{aligned}
    \right).
  \]
  We must check that the composition defined is well-defined on isomorphism
  classes, is associative, and, with the above identity maps, obeys the
  unitality axiom. These are straightforward, but lengthy, exercises in
  using the available colimits and monoidal structure to show that
  the relevant diagrams of decorations commute. The interested reader can find
  details in the appendix (\ref{app:welldefined}--\ref{app:identities}).
\end{proof}

\begin{remark}
While at first glance it might seem surprising that we can construct a
composition rule for decorations $s\colon  1\to FN$ and $t\colon  1 \to FM$ just from
monoidal structure, the copair $[j_N,j_M]\colon  N+M \to N+_YM$ of the pushout maps
contains the data necessary to compose them. Indeed, this is the key insight of the
decorated cospan construction. To wit, the coherence maps for the lax monoidal
functor allow us to construct an element of $F(N+M)$ from the monoidal product
$s \otimes t$ of the decorations, and we may then post-compose with $F[j_N,j_M]$ to
arrive at an element of $F(N+_YM)$. The map $[j_N,j_M]$ encodes the
identification of the image of $Y$ in $N$ with the image of the same in $M$, and
so describes merging the `overlap' of the two decorations.
\end{remark}

Our main theorem is that when \emph{braided} monoidal structure is present, the
category of decorated cospans is a hypergraph category, and moreover one into
which the category of `undecorated' cospans widely embeds.  This embedding
motivates the monoidal and hypergraph structures we put on $F\mathrm{Cospan}$.

\begin{theorem} \label{thm:fcospans}
  Let $\mathcal C$ be a category with finite colimits, $(\mathcal D, \otimes)$ a
  braided monoidal category, and $(F,\varphi)\colon  (\mathcal C,+) \to (\mathcal D,
  \otimes)$ be a lax braided monoidal functor. Then we may give
  $F\mathrm{Cospan}$ a symmetric monoidal and hypergraph structure such that
  there is a wide embedding of hypergraph categories
  \[
    \mathrm{Cospan(\mathcal C)} \hooklongrightarrow F\mathrm{Cospan}.
  \]
\end{theorem}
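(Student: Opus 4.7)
The plan is to give $F\mathrm{Cospan}$ its monoidal structure on objects via the coproduct in $\mathcal C$, to construct the wide embedding from $\mathrm{Cospan}(\mathcal C)$ by decorating each cospan trivially, and then to lift every structural piece of the symmetric monoidal hypergraph structure on $\mathrm{Cospan}(\mathcal C)$ through this embedding. The braided hypothesis on $\mathcal D$ and $(F,\varphi)$ is precisely what lets the decorations cooperate with the coherence diagrams.

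First I would set up the embedding $\mathrm{Cospan}(\mathcal C) \hooklongrightarrow F\mathrm{Cospan}$. On objects it is the identity; on a cospan $X \to N \leftarrow Y$ it attaches the \emph{empty} decoration $1 \xrightarrow{\varphi_1} F\varnothing \xrightarrow{F!} FN$, where $!\colon \varnothing \to N$ is the unique map. Functoriality on identities is immediate from the identity decoration defined in the preceding proof. For composites I would check that the empty decoration on a pushout $N +_Y M$ agrees with the prescribed composite of two empty decorations; this reduces to a diagram built from $\varphi_1$, $\varphi_{\varnothing,\varnothing}$, naturality of $\varphi$, and the lax-monoidal unit coherence, together with the universal property of $\varnothing$. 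Faithfulness is automatic because distinct cospan morphisms have distinct underlying cospans, and bijectivity on objects holds by definition, so the embedding is wide.

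Next I would give $F\mathrm{Cospan}$ its monoidal product. On objects, $X \otimes Y := X + Y$; on two decorated cospans with decorations $s$ and $t$ the underlying cospan is the coproduct of the cospans in $\mathcal C$ (legal because $+$ preserves colimits) and the decoration is $1 \xrightarrow{\lambda^{-1}} 1 \otimes 1 \xrightarrow{s \otimes t} FN \otimes FM \xrightarrow{\varphi_{N,M}} F(N+M)$. Well-definedness on isomorphism classes and bifunctoriality with respect to composition are diagram chases using naturality of $\varphi$, the lax-monoidal coherence, and the universal property of pushouts. The associator, unitors, and symmetry isomorphisms are taken to be the embedded images of the corresponding structure cospans in $\mathrm{Cospan}(\mathcal C)$, each decorated trivially. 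Naturality and the pentagon and triangle coherences follow by combining naturality of the underlying coherences in $\mathrm{Cospan}(\mathcal C)$ with a repeat of the empty-decoration argument used for the embedding. The symmetry hexagon is the step that genuinely uses the \emph{braided} hypothesis: swapping two decorations past each other produces, via $\varphi$, the braiding $\sigma$ in $\mathcal D$, and the lax braided axiom for $(F,\varphi)$ is exactly what identifies this with the decoration obtained from the coproduct swap in $\mathcal C$.

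Finally I would install the hypergraph structure on $F\mathrm{Cospan}$ by transporting the known one on $\mathrm{Cospan}(\mathcal C)$: set $(\mu_X,\delta_X,\eta_X,\epsilon_X)$ to be the embedded images of the copairing $[1_X,1_X]$, its opposite, the map $!\colon \varnothing \to X$, and its opposite, each with empty decoration. The Frobenius equations, specialness, commutativity, and the tensor-compatibility clauses $\mu_{X\otimes Y} = (\mu_X \otimes \mu_Y)\circ(1 \otimes \sigma_{YX} \otimes 1)$ and so on hold in $\mathrm{Cospan}(\mathcal C)$, and each identity lifts to $F\mathrm{Cospan}$ because composing empty decorations yields empty decorations. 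The principal obstacle throughout is bookkeeping: in every coherence square one must carefully separate the portion of a decoration transported by $F$ applied to a map in $\mathcal C$ from the portion moved by the monoidal coherence maps of $(F,\varphi)$ in $\mathcal D$, and show that the two routes agree. Once that discipline is in place every remaining verification is a mechanical reduction to the corresponding identity in $\mathrm{Cospan}(\mathcal C)$.
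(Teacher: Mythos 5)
Your proposal is correct and follows essentially the same route as the paper: embed $\mathrm{Cospan}(\mathcal C)$ via the empty decoration $F!\circ\varphi_1$, define the tensor of decorations as $\varphi_{N,M}\circ(s\otimes t)\circ\lambda^{-1}$, transport the coherence isomorphisms and the Frobenius structure along the embedding, and reduce every verification to the trivial action of empty decorations plus naturality and coherence of $\varphi$. The only cosmetic difference is where you locate the essential use of the braided hypothesis (you cite the symmetry hexagon, the paper emphasises the interchange law for $\otimes$); both are genuine uses of the lax braided axiom, so nothing is missing.
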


\begin{proof}
  Recall that the identity decorated cospan has apex decorated by $1
  \stackrel{\varphi_1}\longrightarrow F\varnothing \stackrel{F!}\longrightarrow
  FX$. Given any cospan $X \to N \leftarrow Y$, we call the decoration $1
  \stackrel{\varphi_1}\longrightarrow F\varnothing \stackrel{F!}\longrightarrow
  FN$ the \define{empty decoration} on $N$. We define a functor 
  \[
    \mathrm{Cospan(\mathcal C)} \hooklongrightarrow F\mathrm{Cospan}.
  \]
  mapping each object of $\mathrm{Cospan(\mathcal C)}$ to itself as an object
  of $F\mathrm{Cospan}$, and each cospan in $\mathcal C$ to the same cospan
  decorated with the empty decoration on its apex. As the composite of two
  empty-decorated cospans is again empty-decorated (see Appendix
  \ref{app:emptydecorations}), this defines a functor.

  We define the monoidal product of objects $X$ and $Y$ of $F\mathrm{Cospan}$ to
  be their coproduct $X+Y$ in $\mathcal C$, and define the monoidal product of
  decorated cospans $(X \stackrel{i_X}\longrightarrow N
  \stackrel{o_Y}\longleftarrow Y,\enspace 1 \stackrel{s}\longrightarrow FN)$ and
  $(X' \stackrel{i_{X'}}\longrightarrow N' \stackrel{o_{Y'}}\longleftarrow
  Y',\enspace 1 \stackrel{t}\longrightarrow FN')$ to be 
  \[
    \left(
    \begin{aligned}
      \xymatrix{
	& N+N' \\  
	X+X' \ar[ur]^{i_X+i_{X'}} && Y+Y' \ar[ul]_{o_Y+o_{Y'}}
      }
    \end{aligned}
    ,
    \qquad
    \begin{aligned}
      \xymatrixrowsep{.8pc}
      \xymatrix{
	F(N+N') \\
	FN \otimes FN' \ar[u]_{\varphi_{N,N'}}\\
	1 \otimes 1 \ar[u]_{s \otimes t} \\
	1 \ar[u]_{\lambda^{-1}}
      }
    \end{aligned}
    \right).
  \]
  Using the braiding in $\mathcal D$, we can show that this proposed monoidal
  product is functorial (Appendix \ref{app:monoidality}). Choosing associator,
  unitors, and braiding in $F\mathrm{Cospan}$ to be the images of those in
  $\mathrm{Cospan(\mathcal{C})}$, we have a symmetric monoidal category. These
  transformations remain natural transformations when viewed in the category of
  $F$-decorated cospans as they have empty decorations (see Appendix
  \ref{app:naturality}), and obey the required coherence laws as they are
  images of maps that obey these laws in $\mathrm{Cospan(\mathcal{C})}$. 

  Similarly, to arrive at the hypergraph structure on $F\mathrm{Cospan}$, we
  simply equip each object $X$ with the image of the special commutative
  Frobenius monoid specified by the hypergraph structure of
  $\mathrm{Cospan(\mathcal{C})}$. It is evident that this choice of structures
  implies the above wide embedding is a hypergraph functor.
\end{proof}

Note that if the monoidal unit in $(\mathcal D,\otimes)$ is the initial object,
then each object only has one possible decoration: the empty decoration. This
immediately implies the following corollary.
\begin{corollary}
  Let $1_{\mathcal C}\colon (\mathcal C,+) \to (\mathcal C,+)$ be the identity functor
  on a category $\mathcal C$ with finite colimits. Then
  $\mathrm{Cospan}(\mathcal C)$ and $1_{\mathcal C}\mathrm{Cospan}$ are
  isomorphic as hypergraph categories.
\end{corollary}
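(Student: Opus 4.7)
The plan is to invoke Theorem \ref{thm:fcospans} with $F = 1_{\mathcal C}$ and then observe that, under this hypothesis, the wide embedding it produces is actually full and essentially injective on hom-classes, hence an isomorphism of hypergraph categories.

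First I would check that the identity functor $1_{\mathcal C} \colon (\mathcal C,+) \to (\mathcal C,+)$, equipped with identity coherence maps $\varphi_1 = 1_\varnothing$ and $\varphi_{X,Y} = 1_{X+Y}$, is in fact a strict (and therefore lax) braided monoidal functor. Since the codomain is the symmetric monoidal category $(\mathcal C,+)$, Theorem \ref{thm:fcospans} applies and yields a symmetric monoidal hypergraph category $1_{\mathcal C}\mathrm{Cospan}$ together with a wide embedding
\[
  \mathrm{Cospan}(\mathcal C) \hooklongrightarrow 1_{\mathcal C}\mathrm{Cospan}.
\]

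Next I would exploit the key observation already flagged before the statement: the monoidal unit of the codomain $(\mathcal C,+)$ is $\varnothing$, the initial object of $\mathcal C$. Consequently, for each object $N$ of $\mathcal C$ there is a unique morphism $\varnothing \to N$, so each apex $N = 1_{\mathcal C}N$ carries exactly one decoration, namely the empty decoration $\varnothing \xrightarrow{\,!\,} N$. This means that an $F$-decorated cospan is determined up to isomorphism by its underlying cospan, and that any map of underlying cospans is automatically a map of decorated cospans (the condition $1_{\mathcal C}(n)\circ s = s'$ reduces to a statement about maps out of $\varnothing$, which is vacuous by initiality).

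From this I would conclude that the wide embedding above is bijective on objects (by the definition of wide embedding), faithful (ditto), full (any decorated cospan in $1_{\mathcal C}\mathrm{Cospan}$ is isomorphic to the empty-decorated image of its underlying cospan), and induces a bijection on isomorphism classes of cospans (as just noted, the equivalence relations coincide). Hence it is an isomorphism of categories. Finally, structure preservation is immediate from the construction in Theorem \ref{thm:fcospans}: the symmetric monoidal and hypergraph data on $1_{\mathcal C}\mathrm{Cospan}$ were defined precisely as the images of those on $\mathrm{Cospan}(\mathcal C)$, so the embedding is a hypergraph functor and its inverse automatically preserves the same structure.

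There is no real obstacle here; the whole argument rests on the single observation that initiality of $\varnothing$ forces decorations to be unique. The only mildly delicate bookkeeping is verifying that composition and monoidal product of empty-decorated cospans remain empty-decorated, but this is already handled in Appendix \ref{app:emptydecorations} and reused inside the proof of Theorem \ref{thm:fcospans}.
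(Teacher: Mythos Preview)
Your proposal is correct and follows essentially the same route as the paper: both rest on the single observation that since the monoidal unit in the codomain $(\mathcal C,+)$ is the initial object $\varnothing$, every apex admits a unique decoration (the empty one), so the wide embedding of Theorem~\ref{thm:fcospans} is automatically an isomorphism of hypergraph categories. The paper states this in one line, while you spell out the bijectivity and structure-preservation details, but the argument is the same.
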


Thus we see that there is always a hypergraph functor between decorated cospan
categories $1_{\mathcal C}\mathrm{Cospan} \rightarrow F\mathrm{Cospan}$. This
provides an example of a more general way to construct hypergraph functors
between decorated cospan categories. We detail this in the next section.

\section{Functors between decorated cospan categories} \label{sec:dcf}

Decorated cospans provide a setting for formulating various operations that we
might wish to enact on the decorations, including the composition of these
decorations, both sequential and monoidal, as well as dagger, dualising, and
other operations afforded by the Frobenius structure. We now observe that these
operations are formulated in a systematic way, so that transformations of the
decorating structure---that is, monoidal transformations between the lax
monoidal functors defining decorated cospan categories---respect these
operations. 

\begin{theorem} \label{thm:decoratedfunctors}
  Let $\mathcal C$, $\mathcal C'$ be categories with finite colimits, abusing
  notation to write the coproduct in each category $+$, and $(\mathcal D,
  \otimes)$, $(\mathcal D',\boxtimes)$ be braided monoidal categories. Further let
  \[
    (F,\varphi)\colon  (\mathcal C,+) \longrightarrow (\mathcal D,\otimes)
  \]
  and
  \[
    (G,\gamma)\colon  (\mathcal C',+) \longrightarrow (\mathcal D',\boxtimes)
  \]
  be lax braided monoidal functors. This gives rise to decorated cospan
  categories $F\mathrm{Cospan}$ and $G\mathrm{Cospan}$. 

  Suppose then that we have a finite colimit-preserving functor $A\colon  \mathcal C
  \to \mathcal C'$ with accompanying natural isomorphism $\alpha\colon  A(-)+A(-)
  \Rightarrow A(-+-)$, a lax monoidal functor $(B,\beta)\colon  (\mathcal D, \otimes)
  \to (\mathcal D', \boxtimes)$, and a monoidal natural transformation $\theta\colon 
  (B \circ F, B\varphi\circ\beta) \Rightarrow (G \circ A, G\alpha\circ\gamma)$.
  This may be depicted by the diagram:
  \[
    \xymatrixcolsep{3pc}
    \xymatrixrowsep{3pc}
    \xymatrix{
      (\mathcal C,+) \ar^{(F,\varphi)}[r] \ar_{(A,\alpha)}[d] \drtwocell
      \omit{_\:\theta} & (\mathcal D,\otimes) \ar^{(B,\beta)}[d]  \\
      (\mathcal C',+) \ar_{(G,\gamma)}[r] & (\mathcal D',\boxtimes).
    }
  \]

  Then we may construct a hypergraph functor 
  \[
    (T, \tau)\colon  F\mathrm{Cospan} \longrightarrow G\mathrm{Cospan}
  \]
  mapping objects $X \in F\mathrm{Cospan}$ to $AX \in G\mathrm{Cospan}$, and
  morphisms 
  \[
    \left(
    \begin{aligned}
      \xymatrix{
	& N \\  
	X \ar[ur]^{i} && Y \ar[ul]_{o}
      }
    \end{aligned}
    ,
    \qquad
    \begin{aligned}
      \xymatrix{
	FN \\
	1_{\mathcal D} \ar[u]_{s}
      }
    \end{aligned}
    \right)
    \qquad
  to
  \qquad
    \left(
    \begin{aligned}
      \xymatrix{
	& AN \\  
	AX \ar[ur]^{Ai} && AY \ar[ul]_{Ao}
      }
    \end{aligned}
    ,
    \qquad
    \begin{aligned}
      \xymatrixrowsep{.8pc}
      \xymatrix{
	GAN \\
	BFN \ar[u]_{\theta_N}\\
	B1_{\mathcal D} \ar[u]_{Bs} \\
	1_{\mathcal D'} \ar[u]_{\beta_1}
      }
    \end{aligned}
    \right).
  \]
  Moreover, $(T,\tau)$ is a strict monoidal functor if and only if $(A,\alpha)$
  is.
\end{theorem}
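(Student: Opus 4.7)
My plan is to verify four obligations in order: well-definedness on isomorphism classes, functoriality, the monoidal and hypergraph coherences, and the final biconditional. The skeleton mirrors the proof of Theorem \ref{thm:fcospans}: because $A$ preserves finite colimits, the cospan parts behave correctly, and the real work is showing that the decorations agree.

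First I would check that if $n \colon N \to N'$ is a map of decorated cospans (so $Fn \circ s = s'$), then $An$ is a map between the image decorated cospans; by naturality of $\theta$ at $n$ and functoriality of $B$, $GAn \circ \theta_N \circ Bs \circ \beta_1 = \theta_{N'} \circ BFn \circ Bs \circ \beta_1 = \theta_{N'} \circ Bs' \circ \beta_1$, so $T$ descends to isomorphism classes. For identity preservation, I would check that the decoration attached to $T$ of the identity on $X$ coincides with the identity decoration on $AX$; using the unit axiom of the monoidal natural transformation $\theta$ (which gives $\theta_\varnothing \circ B\varphi_1 \circ \beta_1 = G\alpha_0 \circ \gamma_1$, with $\alpha_0 \colon \varnothing' \to A\varnothing$ the iso coming from $A$ preserving the initial object) together with naturality of $\theta$ at $!\colon \varnothing \to X$, both composites reduce to $G!'_{AX} \circ \gamma_1$.

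The main obstacle is preservation of composition, which is essentially a single large diagram chase. Given composable decorated cospans with apices $N, M$, the canonical isomorphism $\kappa \colon AN +_{AY} AM \to A(N +_Y M)$ provided by $A$ preserving pushouts yields an isomorphism between the cospan part of $T$ of the composite and the composite in $G\mathrm{Cospan}$ of the $T$-images. For the decorations I would assemble a diagram whose commutativity decomposes into: (i) functoriality of $B$ on the $F$-composition formula; (ii) the binary monoidality axiom for $\theta$, comparing $\theta_{N+M} \circ B\varphi_{N,M} \circ \beta_{FN, FM}$ with $G\alpha_{N,M} \circ \gamma_{AN, AM} \circ (\theta_N \boxtimes \theta_M)$; (iii) naturality of $\theta$ at the copairing $[j_N, j_M] \colon N+M \to N +_Y M$ and naturality of $\beta$ at $(s, t)$; and (iv) the lax monoidal unit coherence for $B$ needed to reconcile the two copies of $\lambda^{-1}$ appearing in the source and target composition rules. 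No single step is deep, but keeping the diagram organised is the real labour.

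For the monoidal and hypergraph structure I would define $\tau_{X,Y} \colon AX + AY \to A(X+Y)$ to be the image under the wide embedding $\mathcal{C}' \hookrightarrow G\mathrm{Cospan}$ of $\alpha_{X,Y}$, equipped with the empty decoration. Naturality of $\tau$ then follows from naturality of $\alpha$ together with the fact that empty-decorated cospans compose to empty-decorated cospans (Appendix \ref{app:emptydecorations}). The associator, unitor and braiding coherences reduce to the corresponding axioms for $(A, \alpha)$ as a braided monoidal functor and the analogous coherences in $\mathrm{Cospan}(\mathcal{C}')$, since all morphisms involved are empty-decorated. The hypergraph structure is preserved for the same reason: in both categories the Frobenius data on each object is the empty-decorated image of copairings of identities, and $A$ commutes with coproducts up to $\alpha$. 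Finally, strictness of $(T, \tau)$ amounts to each $\tau_{X,Y}$ being an identity decorated cospan; since its decoration is always empty and its underlying cospan has apex $A(X+Y)$ with structure map $\alpha_{X,Y}$, this happens exactly when $AX + AY = A(X+Y)$ and $\alpha_{X,Y}$ is an identity, i.e., exactly when $(A, \alpha)$ is strict.
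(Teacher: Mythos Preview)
Your proposal is correct and follows essentially the same route as the paper: check well-definedness on isomorphism classes, establish functoriality via the large diagram decomposed into pieces coming from $(B,\beta)$, the monoidal natural transformation $\theta$, and the colimit-preservation of $A$ (the paper's Appendix~\ref{app:functors}), then define $\tau$ as the empty-decorated image of $\alpha$ and inherit the monoidal and hypergraph coherences from those of $(A,\alpha)$.

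One small sharpening is warranted. Your claim that naturality of $\tau$ ``follows from naturality of $\alpha$ together with the fact that empty-decorated cospans compose to empty-decorated cospans'' is a bit loose: the morphisms $T(f\otimes g)$ and $Tf \boxtimes Tg$ in the naturality square carry genuine (generally non-empty) decorations, so after using Appendix~\ref{app:emptydecorations} to push these forward along the empty-decorated $\tau$ you must still compare two non-trivial elements of $GA(N+N')$. The required equality is precisely a subdiagram of your composition-preservation diagram, invoking the monoidality of $\theta$ and the lax coherence of $(B,\beta)$; this is how the paper's Appendix~\ref{app:naturality2} handles it, and your step (ii)--(iv) already supplies exactly what is needed.
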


\begin{proof}
  We must prove that $(T,\tau)$ is a functor, is strong symmetric monoidal, and
  that it preserves the special commutative Frobenius structure on each object.

  Checking the functoriality of $T$ is again an exercise in applying the
  properties of structure available---in this case the colimit-preserving nature
  of $A$ and the monoidality of $(\mathcal D,\boxtimes)$, $(B,\beta)$, and
  $\theta$---to show that the relevant diagrams of decorations commute. Again,
  the interested reader may find details in the appendix (\ref{app:functors}).
  
  The coherence maps of the functor are given by the coherence maps for the
  monoidal functor $A$, viewed now as cospans with the empty decoration. That
  is, we define the coherence maps $\tau$ to be the collection of isomorphisms
  \[
    \tau_1 = 
    \left(
    \begin{aligned}
      \xymatrix{
	& A\varnothing_{\mathcal C} \\  
	\varnothing_{\mathcal C'} \ar[ur]^{\alpha_1} && A\varnothing_{\mathcal
	C} \ar@{=}[ul]
      }
    \end{aligned}
    ,
    \qquad
    \begin{aligned}
      \xymatrixrowsep{.9pc}
      \xymatrix{
	GA\varnothing_{\mathcal C} \\
	G\varnothing_{\mathcal C'} \ar[u]_{G!} \\
	1_{\mathcal D'} \ar[u]_{\gamma_1}
      }
    \end{aligned}
    \right),
  \]
  \[
    \tau_{X,Y}=
    \left(
    \begin{aligned}
      \xymatrix{
	& A(X+Y) \\  
	AX+AY \ar[ur]^{\alpha_{X,Y}} && A(X+Y) \ar@{=}[ul]
      }
    \end{aligned}
    ,
    \qquad
    \begin{aligned}
      \xymatrixrowsep{.9pc}
      \xymatrix{
	GA(X+Y) \\
	G\varnothing_{\mathcal C'} \ar[u]_{G!} \\
	1_{\mathcal D} \ar[u]_{\gamma_1}
      }
    \end{aligned}
    \right),
  \]
  where $X$, $Y$ are objects of $G\mathrm{Cospan}$. As $(A,\alpha)$ is already
  strong symmetric monoidal and $\tau$ merely views these maps in $\mathcal C$
  as empty-decorated cospans in $G\mathrm{Cospan}$, $\tau$ is natural in $X$ and
  $Y$, and obeys the required coherence axioms for $(T,\tau)$ to also be strong
  symmetric monoidal (Appendix \ref{app:naturality2}). Moreover, as $A$ is
  coproduct-preserving and the Frobenius structures on $F\mathrm{Cospan}$ and
  $G\mathrm{Cospan}$ are built using various copairings of the identity map,
  $(T,\tau)$ preserves the hypergraph structure.

  Finally, it is straightforward to observe that the maps $\tau$ are identity
  maps if and only if the maps $\alpha$ are, so $(T,\tau)$ is a strict monoidal
  functor if and only $(A,\alpha)$ is.
\end{proof}

When the decorating structure comprises some notion of topological diagram, such
as a graph, these natural transformations $\theta$ might describe some semantic
interpretation of the decorating structure. In this setting the above theorem
constructs functorial semantics for the decorated cospan category of diagrams.
We conclude this paper with an example of this application of decorated cospans.

\section{Examples} \label{sec:ex}

In this final section we outline two constructions of decorated cospan
categories, based on labelled graphs and linear subspaces respectively, and a
functor between these two categories interpreting each graph as an electrical
circuit. We shall see that the decorated cospan framework allows us to take a
notion of closed system and construct a corresponding notion of open or
composable system, together with functorial semantics for these systems.

This electrical circuits example outlines the motivating application for the
decorated cospan construction; further details can be found in \cite{BF}.

\subsection{Labelled graphs.}

To begin we return to the example of this paper's introduction. 

Recall that a \define{$(0,\infty)$-graph} $(N,E,s,t,r)$ comprises a finite set
$N$ of vertices (or nodes), a finite set $E$ of edges, functions $s,t\colon  E \to N$
describing the source and target of each edge, and a function $r\colon  E \to
(0,\infty)$ labelling each edge. The decorated cospan framework allows us to
construct a category with, roughly speaking, these graphs as morphisms. More
precisely, our morphisms will consist of these graphs, together with subsets of
the nodes marked, with multiplicity, as `input' and `output' connection points.

As suggested in the introduction, pick small categories equivalent to the
categories of finite sets and $(0,\infty)$-graphs such that we may talk about
the set of all $(0,\infty)$-graphs on each finite set $N$.  Then we may consider
the functor
\[
  \lgraph\colon  (\FinSet,+) \longrightarrow (\Set,\times)
\]
taking a finite set $N$ to the set $\lgraph(N)$ of $(0,\infty)$-graphs
$(N,E,s,t,r)$ with set of nodes $N$. On
morphisms let it take a function $f\colon N \to M$ to the function that pushes
labelled graph structures on a set $N$ forward onto the set $M$:
\begin{align*}
  \lgraph(f)\colon  \lgraph(N) &\longrightarrow
  \lgraph(M); \\
  (N,E,s,t,r) &\longmapsto (M,E,f \circ s, f \circ t, r).
\end{align*}
As this map simply acts by post-composition, our map $\lgraph$ is indeed
functorial.

We then arrive at a lax braided monoidal functor $(\lgraph,\zeta)$ by equipping
this functor with the natural transformation 
\begin{align*}
  \zeta_{N,M}\colon  \lgraph(N) \times \lgraph(M)
  &\longrightarrow \lgraph(N+M); \\
  \big( (N,E,s,t,r), (M,F,s',t',r') \big) &\longmapsto
  \big(N+M,E+F,s+s',t+t',[r,r']\big),
\end{align*}
together with the unit map
\begin{align*}
  \zeta_1\colon  1=\{\bullet\} &\longrightarrow \lgraph(\varnothing); \\
  \bullet &\longmapsto
  (\varnothing,\varnothing,!,!,!),
\end{align*}
where we remind ourselves
that we write $[r,r']$ for the copairing of the functions $r$ and $r'$. The
naturality of this collection of morphisms, as well as the coherence laws for
lax braided monoidal functors, follow from the universal property of the coproduct.

Theorem \ref{thm:fcospans} thus allows us to construct a hypergraph category
$\mathrm{GraphCospan}$.  For an intuitive visual understanding of the morphisms
of this category and its composition rule, see this paper's introduction.

\subsection{Linear relations.}
Another example of a decorated cospan category arising from a functor $(\FinSet,+)
\to (\Set, \times)$ is closely related to the category of linear relations. Here
we decorate each cospan in $\Set$ with a linear subspace of $\R^N \oplus
(\R^N)^\ast$, the sum of the vector space generated by the apex $N$ over $\R$
and its vector space dual.

First let us recall some facts about relations. Let $R \subseteq X\times Y$ be
a relation; we write this also as $R\colon  X \to Y$. The opposite relation $R^\opp\colon 
Y\to X$, is the subset $R^\opp \subseteq Y\times X$ such that $(y,x) \in R^\opp$
if and only if $(x,y) \in R$. We say that the image of a subset $S \subseteq X$
under a relation $R\colon  X \to Y$ is the subset of all elements of the codomain $Y$
related by $R$ to an element of $S$. Note that if $X$ and $Y$ are vector spaces
and $S$ and $R$ are both linear subspaces, then the image $R(S)$ of $S$ under
$R$ is again a linear subspace.

Now any function $f\colon N \to M$ induces a linear map $f^\ast\colon  \R^M \to
\R^N$ by precomposition. This linear map $f^\ast$ itself induces a dual map
$f_\ast\colon (\R^N)^\ast \to (\R^M)^\ast$ by precomposition. Furthermore
$f^\ast$ has, as a linear relation $f^\ast \subseteq \R^M \oplus \R^N$, an
opposite linear relation $(f^\ast)^\opp\colon  \R^N \to \R^M$.  Define the
functor 
\[
  \linsub\colon  (\FinSet,+) \longrightarrow (\Set,\times)
\]
taking a finite set $N$ to set of linear subspaces of the vector space $\R^N
\oplus (\R^N)^\ast$, and taking a function $f\colon  N \to M$ to the function
$\linsub(N) \to \linsub(M)$ induced by the sum of these two relations:
\begin{align*}
  \linsub(f)\colon  \linsub(N) &\longrightarrow \linsub(M); \\
  L &\longmapsto \big((f^\ast)^\opp \oplus f_\ast\big)(L).
\end{align*}
The above operations on $f$ used in the construction of this map are functorial,
and so it is readily observed that $\linsub$ is indeed a functor.

It is moreover lax braided monoidal as the sum of linear subspace of $\R^N
\oplus (\R^N)^\ast$ and a linear subspace of $\R^M \oplus (\R^M)^\ast$ may be
viewed as a subspace of $\R^N \oplus (\R^N)^\ast \oplus \R^M \oplus (\R^M)^\ast
\cong \R^{N+M} \oplus (\R^{N+M})^\ast \cong \R^{M+N} \oplus (\R^{M+N})^\ast$,
and the empty subspace is a linear subspace of each $\R^N \oplus (\R^N)^\ast$.

We thus have a hypergraph category $\mathrm{LinSubCospan}$.

\subsection{Electrical circuits.} 
Electrical circuits and their diagrams are the motivating application for the
decorated cospan construction. Specialising to the case of networks of linear
resistors, we detail here how we may use the category $\mathrm{LinSubCospan}$ to
provide semantics for the morphisms of $\mathrm{GraphCospan}$ as diagrams of
networks of linear resistors.

Intuitively, after choosing a unit of resistance, say ohms ($\Omega$), each
$(0,\infty)$-graph can be viewed as a network of linear resistors, with the
$(0,\infty)$-graph of the introduction now more commonly depicted as
\begin{center}
  \begin{tikzpicture}[circuit ee IEC, set resistor graphic=var resistor IEC graphic]
    \node[contact]         (A) at (0,0) {};
    \node[contact]         (B) at (3,0) {};
    \node[contact]         (C) at (1.5,-2.6) {};
    \coordinate         (ua) at (.5,.25) {};
    \coordinate         (ub) at (2.5,.25) {};
    \coordinate         (la) at (.5,-.25) {};
    \coordinate         (lb) at (2.5,-.25) {};
    \path (A) edge (ua);
    \path (A) edge (la);
    \path (B) edge (ub);
    \path (B) edge (lb);
    \path (ua) edge  [resistor] node[label={[label distance=1pt]90:{$0.2\Omega$}}] {} (ub);
    \path (la) edge  [resistor] node[label={[label distance=1pt]270:{$1.3\Omega$}}] {} (lb);
    \path (A) edge  [resistor] node[label={[label distance=2pt]180:{$0.8\Omega$}}] {} (C);
    \path (C) edge  [resistor] node[label={[label distance=2pt]0:{$2.0\Omega$}}] {} (B);
  \end{tikzpicture}
\end{center}
$\mathrm{GraphCospan}$ may then be viewed as a category with morphisms
circuits of linear resistors equipped with chosen input and output terminals.

The suitability of this language is seen in the way the different categorical
structures of $\mathrm{GraphCospan}$ capture different operations that can be
performed with circuits. To wit, the sequential composition expresses the fact
that we can connect the outputs of one circuit to the inputs of the next, while
the monoidal composition models the placement of circuits side-by-side.
Furthermore, the symmetric monoidal structure allows us reorder input and output
wires, the compactness captures the interchangeability between input and
output terminals of circuits---that is, the fact that we can choose any input
terminal to our circuit and consider it instead as an output terminal, and vice
versa---and the Frobenius structure expresses the fact that we may wire any
node of the circuit to as many additional components as we like.

Moreover, Theorem \ref{thm:decoratedfunctors} provides semantics. Each node in a
network of resistors can be assigned an electric potential and a net current
outflow at that node, and so the set $N$ of vertices of a $(0,\infty)$-graph can
be seen as generating a space $\R^N \oplus (\R^N)^\ast$ of electrical states of
the network. We define a natural transformation 
\[
  \res\colon  \lgraph \Longrightarrow \linsub
\]
mapping each $(0,\infty)$-graph on $N$, viewed as a network of resistors, to the
linear subspace of $\R^N \oplus (\R^N)^\ast$ of electrical states permitted by
Ohm's law.\footnote{Note that these states need not obey Kirchhoff's current
law.} In detail, let $\psi \in \R^N$. We define the power $Q\colon  \R^N \to \R$
corresponding to a $(0,\infty)$-graph $(N,E,s,t,r)$ to be the function
\[
  Q(\psi) = \sum_{e \in E} \frac1{r(e)}
  \Big(\psi\big(t(e)\big)-\psi\big(s(e)\big)\Big)^2.
\]
Then the states of a network of resistors are given by a potential $\phi$ on the
nodes and the gradient of the power at this potential:
\begin{align*}
  \res_N\colon  \lgraph(N) &\longrightarrow \linsub(N)\\
  (N,E,s,t,r) &\longmapsto \{(\phi,\nabla Q_\phi) \mid \phi \in \R^N\}.
\end{align*}
This defines a monoidal natural transformation. Hence, by Theorem
\ref{thm:decoratedfunctors}, we obtain a hypergraph functor
$\mathrm{GraphCospan} \to \mathrm{LinSubCospan}$. 

The semantics provided by this functor match the standard interpretation of
networks of linear resistors. The maps of the Frobenius monoid take on the
interpretation of perfectly conductive wires, forcing the potentials at all
nodes they connect to be equal, and the sum of incoming currents to equal the
sum of outgoing currents---precisely the behaviour implied by Kirchhoff's laws.
More generally, let $(X \stackrel{i}{\rightarrow} N
\stackrel{o}{\leftarrow} Y, \, (N,E,s,t,r))$ be a morphism of
$\mathrm{GraphCospan}$, with $Q$ the power function corresponding to the graph
$\Gamma = (N,E,s,t,r)$. The image of this decorated cospan in
$\mathrm{LinSubCospan}$ is the decorated cospan $(X \stackrel{i}{\rightarrow} N
\stackrel{o}{\leftarrow} Y, \, \{(\phi,\nabla Q_\phi) \mid \phi \in \R^N\})$.
Then it is straightforward to check that the subspace 
\[
  \linsub[i,o]\big(\res_N(\Gamma)\big)\subseteq \R^{X+Y} \oplus (\R^{X+Y})^\ast
\]
is the subspace of electrical states on the terminals $X+Y$ such that currents
and potentials can be chosen across the network of resistors $(N,E,s,t,r)$ that
obey Ohm's and, on its interior, Kirchhoff's laws. In particular, after passing
to a subspace of the terminals in this way, composition in
$\mathrm{LinSubCospan}$ corresponds to enforcing Kirchhoff's laws on the shared
terminals of the two networks. A full exposition of this example can be found in
\cite{BF}.

\appendix
\section{Appendix: Proofs} \label{sec:proofs}

In this appendix we include the more technical aspects of the proofs that the
proposed constructions are well-defined and have the properties claimed. This
requires checking that a number of diagrams in decorated cospan categories
commute. In particular, here we check that the decorations agree; the required
properties of cospans themselves are well-established. While many of these
computations are on the routine side, we err on the side of more detail in the
hope that these details might be instructive in understanding more precisely
which aspects of monoidal functors imply given aspects of decorated cospan
categories.

\subsection{Representation independence for composition of isomorphism
classes of decorated cospans.} \label{app:welldefined}

\noindent

Let 
\[
  n\colon  \big(X \stackrel{i_X}\longrightarrow N \stackrel{o_Y}\longleftarrow Y,\enspace 1
\stackrel{s}\longrightarrow FN\big) \longrightarrow \big(X \stackrel{i'_X}\longrightarrow N'
\stackrel{o'_Y}\longleftarrow Y,\enspace 1 \stackrel{s'}\longrightarrow FN'\big)
\]
and
\[
  m\colon  \big(Y \stackrel{i_Y}\longrightarrow M \stackrel{o_Z}\longleftarrow Z,\enspace 1
\stackrel{t}\longrightarrow FM\big) \longrightarrow \big(Y \stackrel{i'_Y}\longrightarrow M'
\stackrel{o'_Z}\longleftarrow Z,\enspace 1 \stackrel{t'}\longrightarrow FM'\big)
\]
be isomorphisms of decorated cospans. We wish to show that the composite of the
decorated cospans on the left is isomorphic to the composite of the decorated
cospans on the right. As discussed, it is well-known that the composite cospans
are isomorphic, and it remains to us to check the decorations agree too. Let $p\colon 
N+_YM \to N'+_YM'$ be the isomorphism given by the universal property of the
pushout and the isomorphisms $n\colon N \to N'$ and $m\colon  M \to M'$. Then the two
decorations in question are given by the top and bottom rows of the following
diagram.
\[
  \xymatrixrowsep{1pc}
  \xymatrixcolsep{1pc}
  \xymatrix{
    &&&& FN \otimes FM \ar[dd]^{Fn \otimes Fm}_\sim
    \ar[rr]^{\varphi_{N,M}} && F(N+M) \ar[dd]^{F(n+m)}_\sim
    \ar[rr]^{F[j_N,j_M]} && F(N+_YM) \ar[dd]^{Fp}_\sim \\ 
    1 \ar[rr]^(.4){\lambda^{-1}} && 1\otimes 1 \ar[urr]^{s \otimes t}
    \ar[drr]_{s' \otimes t'} &
    \qquad\textrm{\tiny(I)} && \textrm{\tiny(F)} && \textrm{\tiny(C)}\\ 
    &&&& FN' \otimes FM' \ar[rr]_{\varphi_{N',M'}} && F(N'+M')
    \ar[rr]_{F[j_{N'},j_{M'}]} && F(N'+_YM')
  }
\]
The triangle (I) commutes as $n$ and $m$ are morphisms of decorated cospans and
$- \otimes -$ is functorial, (F) commutes by the monoidality of $F$, and (C)
commutes by properties of colimits in $\mathcal C$ and the functoriality of $F$.
This proves the claim.

\subsection{Associativity.} \label{app:associativity}

\noindent

Suppose we have morphisms
\[
  (X \stackrel{i_X}\longrightarrow N \stackrel{o_Y}\longleftarrow Y,\enspace 1
  \stackrel{s}\longrightarrow FN),
\]
\[
  (Y \stackrel{i_Y}\longrightarrow M \stackrel{o_Z}\longleftarrow Z,\enspace 1
  \stackrel{t}\longrightarrow FM), 
\]
\[
  (Z \stackrel{i_Z}\longrightarrow P \stackrel{o_W}\longleftarrow W,\enspace 1
  \stackrel{u}\longrightarrow FP).
\]
It is well-known that composition of isomorphism classes of cospans via
pushout of representatives is associative; this follows from the universal
properties of the relevant colimit. We must check that the pushforward of the
decorations is also an associative process. Write 
\[
  \tilde a\colon  (N+_YM)+_ZP \longrightarrow N+_Y(M+_ZP)
\]
for the unique isomorphism between the two pairwise pushouts constructions
from the above three cospans. Consider then the following diagram, with leftmost
column the decoration obtained by taking the composite of the first two morphisms
first, and the rightmost column the decoration obtained by taking the composite
of the last two morphisms first.
\[
  \xymatrixcolsep{-.6pc}
  \xymatrix{ 
    \scriptstyle F((N+_YM)+_ZP) \ar[rrrrrrrr]^{F\tilde a} &&&&&&&&
    \scriptstyle F(N+_Y(M+_ZP)) \\
    &&&& \textrm{\tiny(C)} \\
    \scriptstyle F((N+_YM)+P) \ar[uu]^{F[j_{N+_YM},j_P]} &&&&&&&& \scriptstyle
    F(N+(M+_ZP)) \ar[uu]_{F[j_N,j_{M+_ZP}]} \\
    && \scriptstyle F((N+M)+P) \ar[ull]_(.35)*+<6pt>_{\scriptstyle F([j_N,J_M]+1_P)}
    \ar[rrrr]^{Fa} &&&& \scriptstyle F(N+(M+P))
    \ar[urr]^(.35)*+<6pt>^{\scriptstyle F(1_N+[j_M,j_P])} \\
    & \textrm{\tiny(F2)} &&&&&& \textrm{\tiny(F3)} \\
    \scriptstyle F(N+_YM)\otimes FP \ar[uuu]^{\varphi_{N+_YM,P}} &&&& 
    \textrm{\tiny(F1)} &&&& \scriptstyle FN\otimes F(M+_ZP)
    \ar[uuu]_{\varphi_{N,M+_ZP}} \\
    && \scriptstyle F(N+M)\otimes FP \ar[ull]^(.6)*+<6pt>^{\scriptstyle
      F[j_N,j_M] \otimes 1_{FP}} \ar[uuu]_{\varphi_{N+M,P}} &&&& \scriptstyle
      FN \otimes F(M+P) \ar[urr]_(.6)*+<6pt>_{\scriptstyle 1_{FN} \otimes
    F[j_M,j_P]} \ar[uuu]^{\varphi_{N,M+P}} \\
    &&& \scriptstyle (FN \otimes FM) \otimes FP \ar[ul]^{\varphi_{N,M} \otimes
    1_{FP}} \ar[rr]^{a} && \scriptstyle FN \otimes (FM \otimes FP)
    \ar[ur]_{\phantom{1}1_{FN} \otimes \varphi_{M,P}} \\ 
    &&&& \textrm{\tiny(D2)} \\
    &&& \scriptstyle (1 \otimes 1) \otimes 1 \ar[uu]^{(s \otimes t) \otimes u}
    \ar[rr]^{a} && \scriptstyle 1 \otimes (1 \otimes 1) \ar[uu]_{s
    \otimes (t \otimes u)} \\
    &&&& \textrm{\tiny(D1)} \\
    &&&& \scriptstyle 1 \otimes 1 \ar[uul]^{\rho^{-1} \otimes 1} \ar[uur]_{1
      \otimes \lambda^{-1}} \\
      &&&& \scriptstyle 1 \ar[u]^{\lambda^{-1}}
  }
\]
This diagram commutes as (D1) is the triangle coherence equation for the
monoidal category $(\mathcal D,\otimes)$, (D2) is naturality for the associator
$a$, (F1) is the associativity condition for the monoidal functor $F$, (F2)
and (F3) commute by the naturality of $\varphi$, and (C) commutes as it is the
$F$-image of a hexagon describing the associativity of the pushout.  This
shows that the two decorations obtained by the two different orders of
composition of our three morphisms are equal up to the unique isomorphism
$\tilde a$ between the two different pushouts that may be obtained. Our
composition rule is hence associative.

\subsection{Identity morphisms.}  \label{app:identities}

\noindent 

We shall show that the claimed identity morphism on $Y$, the decorated cospan
\[
  (Y \stackrel{1_Y}\longrightarrow Y \stackrel{1_Y}\longleftarrow Y,\enspace 1
\stackrel{F!\circ \varphi_1}\longrightarrow FY),
\]
is an identity for composition
on the right; the case for composition on the left is similar. The cospan in
this pair is known to be the identity cospan in $\mathrm{Cospan}(\mathcal C)$.
We thus need to check that, given a morphism 
\[
  (X \stackrel{i}\longrightarrow N
\stackrel{o}\longleftarrow Y,\enspace 1 \stackrel{s}\longrightarrow FN),
\] 
the composite of the product $s \otimes (F! \circ \varphi_1)$ with the $F$-image
of the copairing $[1_N,i_Y]\colon  N+Y \to N$ of the pushout maps is again the same
element $s$; this composite being, by definition, the decoration of the
composite of the given morphism and the claimed identity map. This is shown by
the commutativity of the diagram below, with the path along the lower edge equal
to the aforementioned pushforward.
\[
  \xymatrixcolsep{1.1pc}
  \xymatrix{
    1 \ar[rrrrrrrrrr]^s \ar[ddrr]_{\rho_1^{-1}} &&&&&&&&&& FN \\
    &&&& \textrm{\tiny(D1)} &&&& \textrm{\tiny(F1)}\\
    && 1 \otimes 1 \ar[rr]^{s\otimes 1} \ar[ddrrrr]_{s \otimes (F! \circ
    \varphi_1)} && FN \otimes 1 \ar[uurrrrrr]^{\rho_{FN}} \ar[rr]^(.57){1_{FN}
  \otimes \varphi_1}
    && FN \otimes F\varnothing \ar[rr]^{\varphi_{N,\varnothing}} \ar[dd]_{1_{FN}
  \otimes F!} && F(N+\varnothing) \ar[uurr]^(.2){F\rho_N=F[1_N,!]}
      \ar[dd]_{F(1_N+!)} \quad \textrm{\tiny(C)}\\
      &&&&& \textrm{\tiny(D2)} && \textrm{\tiny(F2)}\\
      &&&&&& FN \otimes FY \ar[rr]_{\varphi_{N,M}} && F(N+Y)
      \ar[uuuurr]_{F[1_N,i_Y]}
  }
\]
This diagram commutes as each subdiagram commutes: (D1) commutes by the
naturality of $\rho$, (D2) by the functoriality of the monoidal product in
$\mathcal D$, (F1) by the unit axiom for the monoidal functor $F$, (F2) by the
naturality of $\varphi$, and (C) due to the properties of colimits in $\mathcal
C$ and the functoriality of $F$.

\subsection{Empty decorations.} \label{app:emptydecorations}

\noindent

We generalise the previous observation for identity morphisms. Let
\[
  (X \stackrel{i_X}\longrightarrow N
\stackrel{o_Y}\longleftarrow Y,\enspace 1 \stackrel{s}\longrightarrow FN),
\]
be a decorated cospan, and suppose we have an empty-decorated cospan 
\[
  (Y \stackrel{i_Y}\longrightarrow M \stackrel{o_Z}\longleftarrow Z,\enspace 1
  \stackrel{\varphi\circ F!}\longrightarrow FM).
\]
Here we show that the composite of these decorated cospans is 
\[
  \big(X \stackrel{j_N \circ i_X}\longrightarrow N+_YM \stackrel{j_M \circ
  o_Z}\longleftarrow Z,\enspace 1 \stackrel{Fj_N \circ s}\longrightarrow
  F(N+_YM)\big).
\]
In particular, the decoration on the composite is the decoration $s$ pushed
forward along the $F$-image of the map $j_N\colon N \to N+_YM$ to become an
$F$-decoration on $N+_YM$. We say that the empty decoration acts trivially on
other decorations. The analogous statement holds for composition with an
empty-decorated cospan on the left.

As is now familiar, a statement of this sort is proved by a large commutative
diagram:
\[
  \xymatrixcolsep{.63pc}
  \xymatrix{
    1 \ar[rrrrrr]^s \ar[ddrr]_{\rho_1^{-1}} &&&&&& FN \ar[rrrr]^{Fj_N} &&&& F(N+_YM) \\
    &&& \textrm{\tiny(D1)} &&& \textrm{\tiny(F1)} && \textrm{\tiny(C1)} \\
    && 1 \otimes 1 \ar[rr]^{s\otimes 1} \ar[ddrrrr]_{s \otimes (F! \circ
    \varphi_1)} && FN \otimes 1 \ar[uurr]^{\rho_{FN}} \ar[rr]^{1_{FN}
  \otimes \varphi_1}
    && FN \otimes F\varnothing \ar[rr]^{\varphi_{N,\varnothing}} \ar[dd]_{1_{FN}
  \otimes F!} && F(N+\varnothing) \ar[uull]_{F\rho_N=F[1_N,!]}
  \ar[uurr]^{F[j_N,!]}
      \ar[dd]_{F(1_N+!)} \quad \textrm{\tiny(C2)}\\
      &&&&& \textrm{\tiny(D2)} && \textrm{\tiny(F2)}\\
      &&&&&& FN \otimes FM \ar[rr]_{\varphi_{N,M}} && F(N+M)
      \ar[uuuurr]_{F[j_N,j_M]}
  }
\]
This subdiagrams in this diagram commute for the same reasons as their
corresponding regions in the previous diagram.

A consequence of the trivial action of the empty decoration is that the
composite of two empty-decorated cospans is again empty-decorated. This is
crucial for the functoriality of the embedding $\mathrm{Cospan(\mathcal{C})}
\hookrightarrow F\mathrm{Cospan}$.

\subsection{Functoriality of monoidal product.} \label{app:monoidality}

\noindent

Suppose we have decorated cospans
\[
  (X \stackrel{i_X}\longrightarrow N \stackrel{o_Y}\longleftarrow Y,\enspace 1
  \stackrel{s}\longrightarrow FN),
  \qquad
  (Y \stackrel{i_Y}\longrightarrow M \stackrel{o_Z}\longleftarrow Z,\enspace 1
  \stackrel{t}\longrightarrow FM), 
\]
\[
  (U \stackrel{i_U}\longrightarrow P \stackrel{o_V}\longleftarrow V,\enspace 1
  \stackrel{u}\longrightarrow FP),
  \qquad
  (V \stackrel{i_V}\longrightarrow Q \stackrel{o_W}\longleftarrow W,\enspace 1
  \stackrel{v}\longrightarrow FQ). 
\]
We must check the so-called interchange law: that the composite of the
column-wise monoidal products is equal to the monoidal product of the row-wise
composites.

Again, for the cospans we take this equality as familiar fact. Write 
\[
  b\colon  (N+P)+_{(Y+V)}(M+Q) \stackrel{\sim}{\longrightarrow} (N+_YM)+(P+_{V}Q).
\]
for the isomorphism between the two resulting representatives of the isomorphism
class of cospans. The two resulting decorations are then given by the leftmost 
and rightmost columns respectively of the diagram below.
\[
  \xymatrixcolsep{1.4pc}
  \xymatrix{ 
    \scriptstyle F((N+P)+_{(Y+V)}(M+Q)) \ar[rrrr]^{Fb}_{\sim} &&&&
    \scriptstyle F((N+_YM)+(P+_VQ)) \\
    & \textsc{\tiny(C)} \\
    \scriptstyle F((N+P)+(M+Q)) \ar[uu]^{F[j_{N+P},j_{M+Q}]} \ar@{.>}[uurrrr] &&&
    \textsc{\tiny(F2)} & \scriptstyle F(N+_YM)\otimes F(P+_VQ)
    \ar[uu]_{\varphi_{N+_YM,P+_VQ}} \\
    \\
    \scriptstyle F(N+P)\otimes F(M+Q) \ar[uu]^{\varphi_{N+P,M+Q}} &&&&
    \scriptstyle F(N+M)\otimes F(P+Q) \ar@{.>}[uullll]
    \ar[uu]_{F[j_N,j_M] \otimes F[j_P,j_Q]} \\
    && \textsc{\tiny(F1)} \\
    \scriptstyle (FN \otimes FP) \otimes (FM \otimes FQ) \ar[uu]^{\varphi_{N,P} \otimes
    \varphi_{M,Q}} \ar@{.>}[rrrr] &&&& \scriptstyle (FN \otimes FM) \otimes (FP \otimes FQ)
    \ar[uu]_{\varphi_{N,M} \otimes \varphi_{P,Q}} \\
    && \textsc{\tiny(D)} \\
    && \scriptstyle (1\otimes1)\otimes(1\otimes1) \ar[uull]^{(s\otimes u) \otimes (t \otimes
    v)} \ar[uurr]_{(s \otimes t) \otimes (u \otimes v)} \\
    && \scriptstyle 1 \ar[u]_{(\lambda^{-1}\otimes\lambda^{-1})\circ\lambda^{-1}}
  }
\]
These two decorations are related by the isomorphism $b$ as the diagram
commutes. We argue this more briefly than before, as the basic structure of
these arguments is now familiar to us. Briefly then, there exist dotted arrows
of the above types such that the subdiagram (D) commutes by the naturality of
the associators and braiding in $\mathcal D$, (F1) commutes by the coherence
diagrams for the braided monoidal functor $F$, (F2) commutes by the naturality of
the coherence map $\varphi$ for $F$, and (C) commutes by the properties of
colimits in $\mathcal C$ and the functoriality of $F$. 

Using now routine methods, it also is straightforward to show that the monoidal
product of identity decorated cospans on objects $X$ and $Y$ is the identity
decorated cospan on $X+Y$; for the decorations this amounts to the observation
that the monoidal product of empty decorations is again an empty decoration.

\subsection{Naturality of coherence maps.} \label{app:naturality}

\noindent

We consider the case of the left unitor; the naturality of the right unitor,
associator, and braiding follows similarly, using the relevant axiom where here
we use the left unitality axiom.

Given a decorated cospan $(X \stackrel{i}\longrightarrow N
\stackrel{o}\longleftarrow Y,\enspace 1 \stackrel{s}\longrightarrow FN)$, we
must show that the diagram of decorated cospans
\[
  \xymatrix{
    X+\varnothing \ar[r]^{i+1} \ar[d]_{\lambda_{\mathcal C}} & N+\varnothing & Y+\varnothing
    \ar[l]_{i+1} \ar[d]^{\lambda_{\mathcal C}} \\
    X \ar[r]_{i} & N & Y \ar[l]^{o} 
  }
\]
commutes, where the $\lambda_{\mathcal C}$ are the maps of the left unitor in $\mathcal C$
considered as empty-decorated cospans, and where the top cospan has decoration 
\[
  1 \stackrel{\lambda^{-1}_{\mathcal D}}{\longrightarrow} 1 \otimes 1 \stackrel{s \otimes
  \varphi_1}\longrightarrow FN \otimes F\varnothing
  \stackrel{\varphi_{N,\varnothing}}\longrightarrow F(N+\varnothing),
\]
and the lower cospan simply has decoration $1 \stackrel{s}{\longrightarrow} FN$. 

Now as the $\lambda$ are isomorphisms in $\mathcal C$ and have empty
decorations, by Appendix \ref{app:emptydecorations} the composite through the upper right
corner is isomorphic to the decorated cospan
\[
  \big(X+\varnothing \stackrel{i+1}\longrightarrow N+\varnothing
  \stackrel{(o+1)\circ \lambda^{-1}_{\mathcal C}}\longleftarrow Y,\enspace 1
  \stackrel{\varphi_{N,\varnothing} \circ (s \otimes \varphi_1) \circ
  \lambda^{-1}_{\mathcal D}}{\xrightarrow{\hspace*{2.5cm}}} F(N+\varnothing)\big),
\]
while the composite through the lower left corner is isomorphic to the decorated
cospan
\[
  (X+\varnothing \stackrel{[i,!]}\longrightarrow N
\stackrel{o}\longleftarrow Y,\enspace 1 \stackrel{s}\longrightarrow FN).
\]
Furthermore, $\lambda_{\mathcal C}\colon  N+\varnothing \rightarrow N$ gives an isomorphism
between these two cospans, and the naturality of the left unitor and the left
unitality axiom in $\mathcal D$ imply that this is in fact an isomorphism of
decorated cospans:
\[
  \xymatrix{
    1 \ar[rrr]^{s} \ar[d]_{\lambda^{-1}_{\mathcal D}} &&& FN \\
    1 \otimes 1 \ar[r]_{s \otimes 1} & FN \otimes 1 \ar[r]_{1 \otimes \varphi_1}
    \ar[urr]^{\lambda_{\mathcal D}} & FN \otimes F\varnothing
    \ar[r]_{\varphi_{N,\varnothing}} & F(N+\varnothing).
    \ar[u]_{F\lambda_{\mathcal C}}
  }
\]

\subsection{Functoriality of functors between decorated cospan categories.} \label{app:functors}

\noindent

Let 
\[
  (X \stackrel{i_X}\longrightarrow N \stackrel{o_Y}\longleftarrow Y, \enspace 1
  \stackrel{s}\longrightarrow FN)
  \qquad \mbox{and} \qquad
  (Y \stackrel{i_Y}\longrightarrow M \stackrel{o_Z}\longleftarrow Z, \enspace 1
  \stackrel{t}\longrightarrow FM), 
\]
be morphisms in $F\mathrm{Cospan}$. As the composition of the cospan part is by
pushout in $\mathcal C$ in both cases, and as $T$ acts as the colimit preserving
functor $A$ on these cospans, it is clear that $T$ preserves composition of
isomorphism classes of cospans. Write
\[
  c\colon  AX+_{AY}AZ \stackrel\sim\longrightarrow A(X+_YZ)
\]
for the isomorphism from the cospan obtained by composing the $A$-images of the
above two decorated cospans to the cospan obtained by taking the $A$-image of their
composite. To see that this extends to an isomorphism of decorated cospans,
observe that the decorations of these two cospans are given by the rightmost and
leftmost columns respectively in the following diagram:
\[
  \xymatrixcolsep{.5pc}
  \xymatrixrowsep{.9pc}
  \xymatrix{ 
    GA(N+_YM) &&&&&&&& \ar[llllllll]_{Gc}^{\sim} G(AN+_{AY}AM) \\
    &&&&& \textsc{\tiny(A)} \\
    BF(N+_YM) \ar[uu]^{\theta_{N+YM}} && \textsc{\tiny(T2)} && GA(N+M)
    \ar[uullll]_{GA[j_N,j_M]} &&&& G(AN+AM) \ar[uu]_{G[j_{AN},j_{AM}]}
    \ar[llll]^{\sim}_{G\alpha} \\
    \\
    BF(N+M) \ar[uu]^{BF[j_N,j_M]} \ar[uurrrr]_{\theta_{N,M}} &&&&
    \textsc{\tiny(T1)} &&&& GAN \boxtimes GAM \ar[uu]_{\gamma_{AN,AM}} \\
    \\
    B(FN \otimes FM) \ar[uu]^{B\varphi_{N,M}} &&&&&&&& BFN \boxtimes BFM
    \ar[uu]_{\theta_N \boxtimes \theta_M} \ar[llllllll]_{\beta_{FN,FM}} \\
    &&&& \textsc{\tiny(B2)} \\
    B(1_{\mathcal D} \otimes 1_{\mathcal D}) \ar[uu]^{B(s \otimes t)} &&&&&&&&
    B1_{\mathcal D} \boxtimes B1_{\mathcal D} \ar[uu]_{Bs \boxtimes Bt}
    \ar[llllllll]_{\beta_{1,1}} \\
    &&& \textsc{\tiny(B1)} &&&& \textsc{\tiny(D2)} \\
    B1_{\mathcal D} \ar[uu]^{\beta\lambda^{-1}_1} \ar[rrrr]^{\lambda^{-1}_{B1}}
    &&&& 1_{\mathcal D'} \boxtimes B1_{\mathcal D} \ar[uurrrr]^{\beta_1 \boxtimes 1}
    &&&& 1_{\mathcal D'} \boxtimes 1_{\mathcal D'} \ar[uu]^{\beta_1 \boxtimes \beta_1}
    \ar[llll]_{1 \boxtimes \beta_1} \\
    &&&& \textsc{\tiny(D1)} \\
    &&&& 1_{\mathcal D'} \ar[uullll]^{\beta_1} \ar[uurrrr]_{\lambda^{-1}_1}
  }
\]
From bottom to top, \textsc{(D1)} commutes by the naturality of $\lambda$,
\textsc{(D2)} by the functoriality of the monoidal product $-\boxtimes-$,
\textsc{(B1)} by the unit law for $(B,\beta)$, \textsc{(B2)} by the
naturality of $\beta$, \textsc{(T1)} by the monoidality of the natural
transformation $\theta$, \textsc{(T2)} by the naturality of $\theta$, and
\textsc{(A)} by the colimit preserving property of $A$ and the functoriality of
$G$.

We must also show that identity morphisms are mapped to identity morphisms. Let 
\[
  (X \stackrel{1_X}\longrightarrow X \stackrel{1_X}\longleftarrow X, \enspace 1
  \stackrel{F!\circ \varphi_1}\longrightarrow FX)
\]
be the identity morphism on some object $X$ in the category of $F$-decorated
cospans. Now this morphism has $T$-image
\[
  (AX \stackrel{1_{AX}}\longrightarrow AX \stackrel{1_{AX}}\longleftarrow AX, \enspace 1
  \stackrel{\theta_X \circ B(F!\circ \varphi_1) \circ
  \beta_1}{\xrightarrow{\hspace*{2.3cm}}} GAX).
\]
But we have the following diagram
\[
  \xymatrixcolsep{2pc}
  \xymatrixrowsep{.3pc}
  \xymatrix{ 
    &&&& BF\varnothing_{\mathcal C} \ar[ddrr]^{BF!}
    \ar[dddd]^{\theta_\varnothing} \\
    \\
    && B1_{\mathcal D} \ar[uurr]^{B\varphi_1} &&&& BFX \ar[ddrr]^{\theta_X} \\
    &&& \textsc{\tiny (T1)} && \textsc{\tiny (T2)} \\
    1_{\mathcal D'} \ar[uurr]^{\beta_1} \ar[rr]_{\gamma_1}
    \ar[ddddrrrr]_{\gamma_1} && G\varnothing_{\mathcal C'} \ar[rr]_{G\alpha_1}
    && GA\varnothing_{\mathcal C} \ar[rrrr]_{GA!} &&&& GAX \\
    &&& \textsc{\tiny (A1)} && \textsc{\tiny (A2)} \\
    \\
    \\
    &&&& G\varnothing_{\mathcal C'} \ar[uuuu]^{\sim}_{G!} \ar[uuuurrrr]_{G!}
  }
\]
Here \textsc{(A1)} and \textsc{(A2)} commute by the fact $A$ preserves colimits,
\textsc{(T1)} commutes by the unit law for the monoidal natural transformation
$\theta$, and \textsc{(T2)} commutes by the naturality of $\theta$.

Thus
we have the equality of decorations $\theta_X \circ B(F! \circ \varphi_1) \circ
\beta_1
= G! \circ \gamma_1\colon  1 \to GAX$, and so $T$ sends identity morphisms to
identity morphisms.

\subsection{Monoidality of functors between decorated cospan categories.} \label{app:naturality2}

\noindent

The monoidality of a functor $(T,\tau)$ has two aspects: the naturality of the
transformation $\tau$, and the coherence axioms. We discuss the former; since
$\tau$ is just an empty-decorated version of $\alpha$, the latter then
immediately follow from the coherence of $\alpha$.

The naturality of $\tau$ may be proved via the same method as that employed in
Appendix \ref{app:naturality}; we first use Appendix \ref{app:emptydecorations}
to compute the two paths around the naturality square, and then use the
naturality of the coherence map $\alpha$ to show that these two decorated
cospans are isomorphic.

In slightly more detail, suppose we have decorated cospans
\[
  (X \stackrel{i_X}\longrightarrow N
\stackrel{o_Z}\longleftarrow Z,\enspace 1 \stackrel{s}\longrightarrow FN) \quad
\textrm{and} \quad (Y \stackrel{i_Y}\longrightarrow M
\stackrel{o_W}\longleftarrow W,\enspace 1 \stackrel{t}\longrightarrow FM).
\]
Then naturality demands that the cospans
\[
 AX+AY \stackrel{Ai_X+Ai_Y}{\xrightarrow{\hspace*{1.5cm}}} AN+AM
 \stackrel{(o_Z+o_W)\circ\alpha^{-1}}{\xleftarrow{\hspace*{1.5cm}}} A(Z+W)
\]
and
\[
 AX+AY \stackrel{A(i_X+i_Y)\circ\alpha}{\xrightarrow{\hspace*{1.5cm}}} A(N+M)
 \stackrel{A(o_Z+o_W)}{\xleftarrow{\hspace*{1.5cm}}} A(Z+W)
\]
are isomorphic as decorated cospans, with decorations the top and bottom rows of
the diagram below respectively.
\[
  \xymatrixrowsep{0.5pc}
  \xymatrix{
    & 1 \otimes 1 \ar[r]^(.4){\beta_1 \otimes \beta_1} & B1 \otimes B1 \ar[r]^(.4){Bs
    \otimes Bt} & BFN \otimes BFM \ar[r]^{\theta_N \otimes \theta_M} & GAN \otimes
    GAM \ar[r]^{\gamma_{AN,AM}} & G(AN+AM) \ar[dd]^{G\alpha_{N,M}} \\
    1 \ar[ur]^{\lambda^{-1}} \ar[dr]_{\beta_1} \\
    & B1 \ar[r]_(.4){B\lambda^{-1}} & B(1 \otimes 1) \ar[r]_(.4){B(s\otimes t)} & B(FN
    \otimes FM) \ar[r]_{B\varphi_{N,M}} & B(F(N+M))
    \ar[r]_{\theta_{N,M}} & G(A(N+M)).
  }
\]
As it is a subdiagram of the large commutative diagram in Appendix \ref{app:functors},
this diagram commutes. The diagrams required for $\alpha_{N,M}$ to be a morphism of
cospans also commute, so our decorated cospans are indeed isomorphic. This
proves $\tau$ is a natural tranfomation.

\end{document}